\title[Transitive graphs quasi-isometric to planar graphs]{A Note on quasi-transitive graphs \\ quasi-isometric to planar (Cayley) graphs}
\author{Joseph Paul MacManus}
\date{July 18, 2024}
\address{Mathematical Institute,  University of Oxford, Oxford, OX2 6GG, UK}
\email{macmanus@maths.ox.ac.uk}
\DeclareMathOperator{\length}{length}
\DeclareMathOperator{\Isom}{Isom}
\DeclareMathOperator{\Homeo}{Homeo}
\newcommand{\R}{\mathbf{R}}
\newcommand{\Z}{\mathbf{Z}}
\newcommand{\N}{\mathbf{N}}
\newcommand{\C}{\mathbf{C}}
\newcommand{\into}{\hookrightarrow}
\newcommand{\pres}[2]{\langle #1 \ ; \ #2 \rangle}
\newtheorem{theorem}{Theorem}[section]
\newtheorem*{theorem*}{Theorem}
\newtheorem{proposition}[theorem]{Proposition}
\theoremstyle{definition}
\newtheorem{remark}[theorem]{Remark}
\newcommand{\myitem}[1]{%
\item[#1]\protected@edef\@currentlabel{#1}%
}
\begin{document}

\begin{abstract}
    Given a connected, locally finite, quasi-transitive graph $X$ which is quasi-isometric to a planar graph $\Gamma$, we remark that one can upgrade $\Gamma$ to be a planar Cayley graph, answering a question raised by Esperet--Giocanti and Hamann.
\end{abstract}

\maketitle




\section*{Introduction}

The topic of `coarse graph theory' is a fast-growing area, where the goal is to understand how `large-scale' assumptions affect the geometry of graphs. 
This point-of-view in graph theory was recently popularised by Georgakopoulos and Papasoglu, and we direct the interested reader to their article for further reading \cite{georgakopoulos2023graph}.  

One particular point of interest has been in understanding those graphs which are quasi-isometric to planar graphs. 
For example, it was shown by the author in \cite{macmanus2023accessibility} that if a quasi-transitive graph is quasi-isometric to a planar graph then it is necessarily accessible, in the sense of Thomassen--Woess \cite{thomassen1993vertex}. By combining this statement with a deep theorem originating in the work of Mess \cite{mess1988seifert}, it is further shown that if a Cayley graph is quasi-isometric to a planar graph, then it is quasi-isometric to a planar Cayley graph. 

Recently, it was shown by Esperet and Giocanti that quasi-transitive graphs which are `minor-excluded' are quasi-isometric to planar graphs \cite{esperet2023coarse}. There, they raise the question as to whether one can take the aforementioned planar graph to be itself quasi-transitive, or possibly to be a Cayley graph.
In \cite{hamann2024quasi} Hamann shows that one can upgrade the planar graph in their theorem to indeed be quasi-transitive, but raises the more general question of if given \textit{any} quasi-transitive graph which is quasi-isometric to a planar graph $\Gamma$, can we similarly upgrade $\Gamma$?
In this short note, we settle the above and record the following fact. 

\begin{restatable}{alphtheorem}{main}\label{thm:main}
Let $X$ be a connected, locally finite, quasi-transitive graph. Suppose $X$ is quasi-isometric to a planar graph. Then $X$ is quasi-isometric to a planar \textbf{Cayley} graph.
\end{restatable}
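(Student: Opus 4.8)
The plan is to leverage the two prior results cited in the introduction. The author has already shown in \cite{macmanus2023accessibility} that a quasi-transitive graph quasi-isometric to a planar graph is accessible, and moreover that a \emph{Cayley} graph quasi-isometric to a planar graph is quasi-isometric to a planar Cayley graph. The task is therefore to bridge the gap between ``quasi-transitive'' and ``Cayley''. My first step would be to recall that quasi-transitivity means $\Aut(X)$ acts on $X$ with finitely many vertex-orbits, whereas being a Cayley graph requires a group acting \emph{freely and transitively}. The natural strategy is to find a group $G \leq \Aut(X)$ acting on $X$ in a sufficiently nice way (ideally freely and cocompactly, i.e. properly discontinuously with finite quotient) so that $X$ is quasi-isometric to a Cayley graph of $G$, and then feed that Cayley graph into the established Cayley-graph version of the theorem.

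The main technical tool I would reach for is a theorem guaranteeing that a locally finite quasi-transitive graph admits a group acting freely and quasi-transitively after passing to a suitable modification. One cannot in general extract a free action directly from $\Aut(X)$, since vertex stabilisers may be nontrivial (even infinite). The standard fix is to invoke a result producing a virtually free-acting or free-acting subgroup: concretely, I would use the fact (going back to Sabidussi / the theory of group actions on graphs, and refined in this coarse setting) that any connected locally finite quasi-transitive graph is quasi-isometric to a Cayley graph of some finitely generated group, \emph{provided} the graph carries a quasi-transitive action by a group with a free cocompact action. The cleanest route is probably: the automorphism group $G = \Aut(X)$ is a locally compact, compactly generated, totally disconnected group acting on $X$ geometrically; by Cayley--Abels theory, $X$ is quasi-isometric to a Cayley--Abels graph of $G$, and one then seeks a discrete (hence free-acting) cocompact subgroup or lattice.

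I expect the main obstacle to be precisely the step of producing a \textbf{group acting freely}: a general quasi-transitive graph need not be quasi-isometric to any Cayley graph at all (there exist transitive graphs, and even quasi-transitive ones, with no cocompact lattice in their automorphism group). This is where the planarity hypothesis must be used essentially. So rather than hoping for an abstract lattice, I would argue as follows. By the accessibility result of \cite{macmanus2023accessibility}, $X$ has a canonical tree-decomposition into finitely many quasi-isometry types of one-ended or finite pieces, equivariant under $\Aut(X)$. Combining accessibility with Dunwoody's structure theory and the planar/surface structure coming from Mess's theorem, the one-ended pieces should be quasi-isometric to planar Cayley graphs (surface-type groups), and the tree structure lets one amalgamate these over a graph of groups whose fundamental group $G$ is finitely generated and acts freely cocompactly on a graph quasi-isometric to $X$. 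The crucial point is that the planarity is preserved through this amalgamation, because one is gluing planar pieces along a tree, and a tree of planar graphs can be realised as a planar graph.

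Concretely, my ordered steps would be: (i) apply the accessibility theorem to obtain an $\Aut(X)$-invariant tree-decomposition of $X$ with finite or one-ended pieces; (ii) use Mess's theorem together with the results of \cite{macmanus2023accessibility} to identify each one-ended piece as quasi-isometric to a planar Cayley graph, hence to a (cocompact Fuchsian / surface) group; (iii) assemble these via the theory of groups acting on trees (Bass--Serre / graph of groups) to produce a finitely generated group $G$ acting freely and cocompactly on a graph $\widetilde{X}$ quasi-isometric to $X$; (iv) verify that $\widetilde{X}$, being a tree-like amalgam of planar pieces, is itself planar, so that $\Cay(G)$ is the desired planar Cayley graph. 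The delicate verification will be that planarity survives the amalgamation and that the quasi-isometry constants stay controlled — this is the heart of the argument and where I would expect to spend the most effort.
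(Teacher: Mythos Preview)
Your overall architecture (accessibility $\to$ analyse one-ended pieces $\to$ reassemble) matches the paper, but there is a genuine gap at step~(ii), and this is precisely where the paper's new content lives. You propose to ``use Mess's theorem together with the results of \cite{macmanus2023accessibility}'' to show each one-ended torso is quasi-isometric to a planar Cayley graph. But Mess's theorem, and the Cayley-graph result in \cite{macmanus2023accessibility} that builds on it, are statements about \emph{groups}: they take as input a finitely generated group quasi-isometric to a plane and output a virtual surface group. The one-ended torsos of a quasi-transitive graph are themselves only quasi-transitive, not Cayley, so you cannot feed them in --- doing so is exactly the one-ended instance of the theorem you are trying to prove. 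The paper handles this case directly and without Mess: in the Gromov-hyperbolic case it passes to the isometry group, quotients by the kernel of the boundary action, and uses a Montgomery--Zippin-type argument to force discreteness, after which the convergence group theorem yields a Fuchsian group; in the non-hyperbolic case it extracts arbitrarily long locally quasi-geodesic loops in the plane, applies Varopoulos' isoperimetric inequality to bound the growth by a quadratic, and then invokes Woess's theorem to get $\Z^2$.

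Your steps (iii)--(iv) also diverge from the paper and carry their own risk. Assembling a free cocompact group action from the Bass--Serre tree of a totally disconnected locally compact group is not automatic (one needs a cocompact lattice in $\Aut(X)$, which need not exist in general), and your planarity-of-the-amalgam check is delicate. The paper avoids all of this: once the one-ended torsos are known to be quasi-isometric to $\mathbf H^2$ or $\R^2$, Hamann's quasi-transitive analogue of Papasoglu--Whyte says the quasi-isometry type of $X$ is determined by the set $\Upsilon$ of one-ended torso types, so $X$ is quasi-isometric to one of $F_2$, $\Z^2\ast\Z^2$, $\Sigma\ast\Sigma$, or $\Z^2\ast\Sigma$ --- explicit groups with evident planar Cayley graphs. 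No group needs to be constructed from $X$, and no amalgam needs to be checked for planarity.
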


More precisely, such a graph is quasi-isometric to either $\{1\}$, $\Z$, $\Z^2$, a cocompact Fuchsian group, or some free product of the aforementioned groups.
It is also possible to further upgrade the planar Cayley graph in Theorem~\ref{thm:main} to be that of a \textit{Kleinian function group}. See Remark~\ref{rmk:function-groups} below for more details. 

It was a long-standing question of Woess \cite[Prob.~1]{woess1991topological} as to whether every vertex-transitive graph is quasi-isometric to a Cayley graph. A counterexample was eventually presented in \cite{eskin2012coarse}. Somehow, the geometry of our setting means that a similar counterexample is impossible here.

\medskip

The proof of Theorem~\ref{thm:main} proceeds case-wise. First considering one-ended graphs, we separately consider those which are hyperbolic and those which are not, in the sense of Gromov. In the hyperbolic case, we combine the convergence theorem of Tukia, Gabai, and Casson--Jungreis with a corollary of the Montgomery--Zippin theorem to deduce the desired conclusion. In the non-hyperbolic case, we appeal to an inequality of Varopoulos to deduce that our graph has a quadratic growth function. Following this, a theorem of Woess \cite{woess1994topological} implies that our graph is quasi-isometric to $\Z^2$. The infinite-ended case is then deduced by combining a theorem of Hamann \cite{hamann2018tree} characterising quasi-isometries between accessible graphs, together with a theorem of the author stating that the graphs in question are indeed accessible \cite{macmanus2023accessibility}.

\subsection*{Conventions}

Every graph in this note is assumed to be simplicial. We take $0$ to be a natural number.

\section{One-ended graphs}

Throughout this section, let $X$ be a connected, locally finite, quasi-transitive, one-ended graph which is quasi-isometric to some planar graph. 
We will assume a working knowledge of the theory of isometry groups of graphs, viewed as topological groups. We point the reader to \cite{cornulier2014metric} as a good modern reference. The survey article of Woess \cite{woess1991topological} also serves as a good introductory text. 

\subsection{Hyperbolic graphs} In this subsection we assume that $X$ is hyperbolic, in the sense of Gromov. 
To avoid confusion we will call a metric space \textit{Gromov-}hyperbolic, while a finitely generated group shall be said to be \textit{word-}hyperbolic. That is, a finitely generated group is word-hyperbolic if it admits a Gromov-hyperbolic Cayley graph. 

By \cite[Thm.~B]{macmanus2023accessibility}, we have that $X$ is quasi-isometric to a complete Riemannian plane. It is a standard fact that the boundary of a Gromov-hyperbolic complete Riemannian plane is a circle. 
Let $G = \Isom(X)$ denote the isometry group of $X$, which acts faithfully and cocompactly on $X$. We have that $G$ is a totally disconnected, locally compact topological group. Consider the kernel $N \leq G$ of the action of $G$ on $\partial_\infty X$.  This is a compact normal subgroup of $G$, and $H := G /N$ acts faithfully and continuously on the circle. Every quotient of a locally compact group is locally compact, so $H$ (or an index-two subgroup of $H$) is a locally compact subgroup of $\Homeo_+(S^1)$. By a corollary of the Montgomery-Zippin theorem \cite[Thm.~4.7]{ghys2001groups}, it follows that $H$ is a Lie group. Since $H$ is totally disconnected, we deduce that it is discrete.

Let $Y = X/N$ be the quotient graph. Note that since $N$ is a compact subgroup of $\Isom(X)$ we have that $N$ acts on $X$ with bounded orbits, and so the quotient map $\pi : X \to  Y$ is a quasi-isometry. It is easy to check that $H$ acts faithfully and continuously on $Y$. It follows that the quotient group $H = G/N$ homeomorphically embeds as a topological group into $\Isom(Y)$ \cite[Prop.~5.B.6]{cornulier2014metric}. Since $H$ is discrete, we deduce that the action of $H$ on $Y$ is properly discontinuous \cite[Prop.~5.B.10]{cornulier2014metric}. Since $Y/H$ is compact,  so we see by the Švarc--Milnor lemma that $H$ is quasi-isometric to $Y$ (and thus also to $X$). Therefore $H$ is a word-hyperbolic group with a circular boundary. By the convergence group theorem of Tukia, Gabai, and Casson--Jungreis \cite{tukia1988homeomorphic, gabai1992convergence, casson1994convergence} we deduce that $H$ is virtually Fuchsian. We have thus shown the following. 

\begin{theorem}\label{thm:hyp}
	Let $X$ be a connected, locally finite, quasi-transitive graph. Suppose $X$ is one-ended, Gromov-hyperbolic, and quasi-isometric to a planar graph. Then $X$ is quasi-isometric to a Fuchsian group, and thus to a planar Cayley graph. 
\end{theorem}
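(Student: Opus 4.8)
The plan is to realise $X$, up to quasi-isometry, as a genuine finitely generated group acting geometrically on a graph, and then to identify that group as virtually Fuchsian by examining the structure of its boundary. First I would invoke \cite[Thm.~B]{macmanus2023accessibility} to replace $X$ by a complete Riemannian plane to which it is quasi-isometric; since a one-ended Gromov-hyperbolic space of this type has boundary a topological circle, the Gromov boundary $\partial_\infty X$ is homeomorphic to $S^1$. The natural candidate for the group is $G = \Isom(X)$, which acts faithfully and cocompactly and which, as the isometry group of a connected locally finite graph, is totally disconnected and locally compact. The difficulty is that $G$ itself need not act properly discontinuously, so the \v{S}varc--Milnor lemma cannot be applied directly.

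To remedy this, I would analyse the induced action of $G$ on the circle $\partial_\infty X \cong S^1$. The kernel $N$ of this action is a compact normal subgroup, and the quotient $H = G/N$ acts faithfully and continuously on $S^1$, so that (possibly after passing to an index-two subgroup) $H$ is a locally compact subgroup of $\Homeo_+(S^1)$. Here the key input is the corollary of the Montgomery--Zippin theorem \cite[Thm.~4.7]{ghys2001groups}, which forces $H$ to be a Lie group; combined with total disconnectedness, this yields that $H$ is \emph{discrete}. Passing to the quotient graph $Y = X/N$, which is quasi-isometric to $X$ because $N$ is compact and hence acts with bounded orbits, the discrete group $H$ now acts faithfully, properly discontinuously, and cocompactly on $Y$, so the \v{S}varc--Milnor lemma identifies $H$ as quasi-isometric to $Y$, and therefore to $X$.

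Having produced a finitely generated group $H$ quasi-isometric to $X$, it follows that $H$ is word-hyperbolic with Gromov boundary a circle. The final step is the convergence group theorem of Tukia, Gabai, and Casson--Jungreis \cite{tukia1988homeomorphic, gabai1992convergence, casson1994convergence}, which guarantees that a word-hyperbolic group acting as a uniform convergence group on $S^1$ is virtually Fuchsian. Since a Fuchsian group admits a planar Cayley graph, chaining the quasi-isometries completes the argument.

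The main obstacle is the passage from the topological group $G = \Isom(X)$ to a discrete group: a priori $G$ may carry positive-dimensional structure, and without ruling this out one cannot invoke \v{S}varc--Milnor to obtain a group in the honest sense. This is exactly where the rigidity supplied by Montgomery--Zippin, applied to the faithful action on the one-dimensional boundary, is essential. The remaining steps, while relying on the deep convergence group theorem as a black box, are then comparatively formal.
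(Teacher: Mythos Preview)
Your proposal is correct and follows essentially the same route as the paper: identify $\partial_\infty X \cong S^1$ via \cite[Thm.~B]{macmanus2023accessibility}, take the kernel $N$ of the boundary action of $G=\Isom(X)$, apply the Montgomery--Zippin corollary \cite[Thm.~4.7]{ghys2001groups} to force $H=G/N$ to be discrete, use \v{S}varc--Milnor on $Y=X/N$, and conclude via the convergence group theorem. The only quibble is your closing remark that ``a priori $G$ may carry positive-dimensional structure'': you already noted that $G$ is totally disconnected, so the issue is rather that $H$ might fail to be discrete, not that $G$ might be positive-dimensional.
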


\subsection{Non-hyperbolic graphs} Now, we assume that $X$ is not hyperbolic. In this case it will be helpful to assume without loss of generality that $X$ is vertex-transitive. This is harmless, as it is a standard fact that every connected, locally finite, quasi-transitive graph is quasi-isometric to a connected, vertex-transitive, locally finite graph \cite[Prop.~5.1]{woess1994topological}.

Proceeding in the spirit of an argument presented in \cite[\S~3]{papasoglu2005quasi}, we will make use of an isoperimetric inequality of Varopoulos, as it appears in \cite[Thm.~2.1]{saloff1995isoperimetric}.

Given our transitive graph $X$, let $V(n)$ denote the size of the closed $n$-ball around a given vertex. In other words, $V : \N \to \N$ is the \textit{growth function} of $X$. Given a subset $\Omega \subset VX$, we denote by $\partial \Omega$ the set of vertices which lie at a distance of exactly 1 from $\Omega$. 

\begin{proposition}
	The vertex-transitive graph $X$ satisfies the following isoperimetric inequality.
	For all finite  $\Omega \subset VX$, we have that 
	$$
	\frac{|\Omega|}{\phi(2|\Omega|)} \leq 4|\partial\Omega|,
	$$
	where $\phi(\lambda) = \inf\{n \in \N : V(n) > \lambda\}$. 
\end{proposition}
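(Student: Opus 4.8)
The plan is to prove this isoperimetric inequality by a counting/averaging argument that relates the size of a finite set $\Omega$ to the size of its boundary through the growth function. The key idea, following the Varopoulos-type inequalities and the approach in Saloff-Coste, is to compare $\Omega$ against balls of a carefully chosen radius. The function $\phi(\lambda) = \inf\{n : V(n) > \lambda\}$ is essentially the inverse of the growth function $V$, so $\phi(2|\Omega|)$ gives the smallest radius at which a ball exceeds twice the size of $\Omega$. Setting $r = \phi(2|\Omega|)$, we have by definition that $V(r-1) \leq 2|\Omega| < V(r)$, and the factor of $2$ will be what lets us conclude that a ball of radius $r$ centred at a typical point of $\Omega$ cannot be contained in $\Omega$, forcing it to meet the boundary.

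First I would unpack the definition of $\phi$ to extract the crucial inequality $V(\phi(2|\Omega|)) > 2|\Omega|$, and set $n = \phi(2|\Omega|)$, $r = n-1$ so that $V(r) \leq 2|\Omega|$. Next I would set up the double-counting. For each vertex $x \in \Omega$, consider the ball $B(x, r)$ of radius $r$; by vertex-transitivity every such ball has exactly $V(r)$ vertices. Since $V(r) \leq 2|\Omega|$, I would want to argue that for at least half the vertices of $\Omega$, the ball $B(x,r)$ is \emph{not} entirely contained in the interior of $\Omega$, and hence must contain a boundary vertex reachable by a short path. The mechanism is an averaging estimate: count pairs $(x,y)$ with $x \in \Omega$, $y \notin \Omega$, and $\dist(x,y) \leq r$; on one hand this is bounded above in terms of $|\partial\Omega|$ and the ball size, since each $y \in \partial\Omega$ (or nearby) can be the endpoint of paths from only $V(r)$-many sources, and on the other hand it is bounded below because the balls around points of $\Omega$ overflow $\Omega$.

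More concretely, I would estimate the number of $x \in \Omega$ for which $B(x,r) \subseteq \Omega$. Summing $|B(x,r) \cap \Omega^c|$ over $x \in \Omega$ and exchanging the order of summation, each $y \notin \Omega$ contributes $|B(y,r) \cap \Omega| \leq V(r)$, and only $y$ within distance $r$ of $\Omega$ matter — these lie in the $r$-neighbourhood of $\partial\Omega$, which has size at most $V(r)|\partial\Omega|$. This gives
$$
\sum_{x \in \Omega} |B(x,r) \cap \Omega^c| \leq V(r)^2 |\partial\Omega|.
$$
Meanwhile, the number of $x \in \Omega$ with $B(x,r) \subseteq \Omega$ is small: each such ball has $V(r)$ vertices all inside $\Omega$, and since $V(r) \le 2|\Omega|$ does not immediately trivialise, I would instead argue that at least $|\Omega|/2$ of the vertices $x$ have $B(x,r) \not\subseteq \Omega$, because the map $x \mapsto B(x,r)$ cannot pack too many disjoint-enough copies into $\Omega$ given $V(r) \le 2|\Omega|$. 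Combining the lower bound (at least $|\Omega|/2$ points each contributing $\geq 1$ to the overflow sum, after accounting for path-length bookkeeping) with the upper bound yields $|\Omega|/2 \leq c\, V(r)|\partial\Omega|$, and using $V(r) \le V(\phi(2|\Omega|))$ with a careful tracking of the constant produces $|\Omega|/\phi(2|\Omega|) \leq 4|\partial\Omega|$.

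The main obstacle I anticipate is the precise bookkeeping that converts the ball-overflow phenomenon into a clean linear bound with the correct constant $4$, rather than just \emph{some} constant. In particular, the delicate step is showing that a definite fraction (here, half) of the vertices of $\Omega$ have their radius-$r$ balls escaping $\Omega$, and then charging each escaping ball to a boundary vertex \emph{without overcounting by more than the factor absorbed into the $4$}. Getting the factor of $2$ inside $\phi(2|\Omega|)$ to interact correctly with the halving of $\Omega$ — so that the $V(r) \le 2|\Omega|$ estimate feeds exactly into the claimed constant — is where I expect the genuine care to be required; the topological and transitivity inputs are otherwise straightforward.
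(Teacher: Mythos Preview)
The paper does not actually prove this proposition; it is quoted from the literature (Coulhon--Saloff-Coste, as cited via \cite[Thm.~2.1]{saloff1995isoperimetric}), so there is no in-paper argument to compare against. That said, your proposed argument has a genuine gap and would not yield the stated inequality.

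The fatal problem is the last step. From your counting you obtain (at best) an inequality of the form $|\Omega| \leq c\, V(r)\,|\partial\Omega|$, where $V(r)$ is a \emph{volume}. You then claim that ``using $V(r)\le V(\phi(2|\Omega|))$'' one arrives at $|\Omega|/\phi(2|\Omega|) \leq 4|\partial\Omega|$, but $\phi(2|\Omega|)$ is a \emph{radius}, not a volume; there is no relation of the form $V(r)\lesssim \phi(2|\Omega|)$ in general, and the step simply does not follow. Your double-counting of pairs $(x,y)$ with $x\in\Omega$, $y\notin\Omega$, $d(x,y)\le r$ can only produce volume factors, and these are far too large. Relatedly, the assertion that ``at least half'' of the points of $\Omega$ have their $r$-ball (with $r=n-1$, $V(r)\le 2|\Omega|$) escaping $\Omega$ is not justified: take $\Omega$ to be a large metric ball and $r$ much smaller than its radius to see the difficulty.

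What is missing is the telescoping mechanism that actually introduces the radius $n=\phi(2|\Omega|)$ linearly. In the Coulhon--Saloff-Coste argument one first uses $V(n)>2|\Omega|$ together with an averaging over translates to find a single automorphism $g$ moving the basepoint by at most $n$ with $|\Omega\setminus g\Omega|>|\Omega|/2$. One then writes $g$ as a product of at most $n$ automorphisms each moving the basepoint to a neighbour, and telescopes: $|\Omega\setminus g\Omega|$ is bounded by a sum of $n$ terms, each of the form $|\Omega'\setminus s\Omega'|$ for a single-step $s$, and every such term is at most $2|\partial\Omega|$. This is what turns the radius $n$ into the multiplicative factor you need and gives $|\Omega|/n\le 4|\partial\Omega|$. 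Your ball-overflow count never accesses this path-length structure, which is why the bound you obtain is off by the ratio $V(n)/n$.
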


The statement above has the easy consequence that if there exists $C, D \in \N$ such that $V(n) \geq Cn^D$ for all $n \geq 1$, then there exists $C'> 0$ such that 
$$
|\Omega|^{\tfrac{D}{D-1}} \leq C' |\partial \Omega|.
$$
for all finite  $\Omega \subset VX$. 
The contrapositive of this says that if we can find `large' subsets with a `small' boundary, then this subsequently bounds the growth of $X$.

By \cite[Thm.~B]{macmanus2023accessibility}, we know that $X$ is quasi-isometric to a complete Riemannian plane $P$ which is not Gromov-hyperbolic. Let $K$ be some triangulation of $P$, such that the inclusion $K \into P$ is a quasi-isometry.\footnote{Note that the existence of such a triangulation is non-obvious and requires proof. However, the plane $P$ constructed in the proof of Theorem~B in \cite{macmanus2023accessibility} comes naturally equipped with such a triangulation, so we shall ignore this technicality.} 

In what follows, a \textit{local $(\lambda,\varepsilon)$-quasi-geodesic} is a closed loop $\ell$ such that any subpath of length less than $\length(\ell)/2$ is a $(\lambda, \varepsilon)$-quasi-geodesic.
A theorem of Papasoglu states that a connected, locally finite graph is Gromov-hyperbolic if and only if it has uniformly thin bigons \cite{papasoglu1995strongly}. As noted in \cite[Lem.~3.8]{papasoglu2005quasi}, this easily implies that $K$ contains arbitrarily long simple loops which are uniformly locally $(\lambda, \varepsilon)$-quasi-geodesic. 
Let $\ell \subset K$ be such a loop. Then by the Jordan curve theorem, $\ell$ separates $P$ into a bounded and an unbounded component. Let $U \subset P$ denote the bounded component, so $U$ is an open disk. It is an immediate consequence of the fact that $\ell$ is a local $(\lambda, \varepsilon)$-quasi-geodesic that $U$ must contain a vertex $v$ of $K$ which lies at a distance of at least $c\cdot \length(\ell)$ from $\ell$, where $c > 0$ is some constant depending only on $\lambda$, $\varepsilon$, and $K$. 
Let $\psi : K \to X$ be a quasi-isometry. Let $y = \psi(v)$. It is easy to check that there must exist some uniform $m > 0$ such that $y$ is contained in a bounded connected component of $X - N_m(\psi(\ell))$ (provided $\ell$ is taken to be sufficiently long). 
In other words, if we write $n = \length(\ell)$ and let $\Omega \subset VX$ denote the vertex set of $U'$, we see that 
$$
|\partial \Omega| \leq c_1 n, \ \ \ V(c_2n) \leq |\Omega|.
$$
where $c_1, c_2 > 0$ are some uniform constants.
By repeating this construction with arbitrarily long $\ell$, we deduce by Varopoulos' inequality that $X$ has at most quadratic growth.

Building on deep results of Gromov \cite{gromov1981groups} and Bass \cite{bass1972degree}, a theorem of Woess \cite[Thm.~4.1]{woess1994topological} implies our desired conclusion. 

\begin{theorem}[Woess]
	Let $X$ be a connected, locally finite, quasi-transitive graph with quadratic growth. Then $G = \Isom(X)$ contains a compact normal subgroup $N$ such that $G/N$ is a finitely generated discrete group with a finite index subgroup isomorphic to $\Z^2$ acting properly discontinuously on the quotient graph $X/N$.
\end{theorem}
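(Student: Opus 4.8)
The plan is to deduce this structural statement about $G = \Isom(X)$ from the fact, established immediately before the theorem, that $X$ has at most quadratic growth. Since $X$ is assumed connected, locally finite, and vertex-transitive (we reduced to this case at the start of the subsection), $G$ acts faithfully, continuously, and cocompactly on $X$, and is a totally disconnected, locally compact group. The first step is to extract a compact normal subgroup $N \trianglelefteq G$ such that the quotient $H := G/N$ is discrete. This is precisely the structural input provided by Woess' work building on Trofimov: for a quasi-transitive graph of polynomial growth one can collapse a compact (equivalently, in the totally disconnected locally compact setting, the intersection of the open vertex-stabilizers is handled by) normal subgroup so that the quotient acts with finite vertex-stabilizers on the quotient graph $Y = X/N$. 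Since $N$ is compact it acts with bounded orbits, so the quotient map $\pi : X \to Y$ is a quasi-isometry, and $Y$ again has at most quadratic growth.

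Next I would pass to the discrete group $H$ and invoke the polynomial-growth machinery. Because $N$ is compact its orbits are bounded, so $H$ acts properly discontinuously and cocompactly on $Y$; by the \v{S}varc--Milnor lemma, $H$ is finitely generated and quasi-isometric to $Y$, hence to $X$. Thus $H$ is a finitely generated group of (at most) quadratic, in particular polynomial, growth. By Gromov's theorem \cite{gromov1981groups}, $H$ is virtually nilpotent, and by the Bass--Guivarc'h formula \cite{bass1972degree} its growth degree is an integer determined by the lower central series. The key point is to upgrade ``at most quadratic'' to ``exactly quadratic of degree two'': since $X$ is one-ended and not quasi-isometric to a point or to $\Z$, the group $H$ is infinite and not virtually $\Z$, so its polynomial growth degree is at least $2$; combined with the upper bound this forces the Bass--Guivarc'h degree to equal $2$.

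Finally, I would identify which virtually nilpotent groups have growth degree exactly $2$. The Bass--Guivarc'h formula shows that a torsion-free nilpotent group of growth degree $2$ must be $\Z^2$ (the only options giving degree $2$ are an abelian group of rank $2$, since a single nonabelian generator would contribute a higher-weight term). Passing to a finite-index subgroup to remove torsion and to isolate the $\Z^2$, we conclude that $H$ has a finite-index subgroup isomorphic to $\Z^2$. This gives the desired conclusion: $G/N = H$ is a finitely generated discrete group with a finite-index subgroup isomorphic to $\Z^2$, acting properly discontinuously on $Y = X/N$.

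The main obstacle is the first step, namely producing the compact normal subgroup $N$ and establishing that the quotient acts discretely. In a general totally disconnected locally compact group the stabilizers are infinite compact-open subgroups, and one cannot naively quotient them away; the content of Trofimov's and Woess' theorems is exactly that \emph{polynomial growth} (here quadratic) forces the existence of such an $N$ with $G/N$ discrete, by controlling how the compact-open stabilizers sit inside $G$. The remaining steps---the \v{S}varc--Milnor reduction, Gromov's theorem, and the Bass--Guivarc'h degree computation pinning down $\Z^2$---are by comparison standard, once the locally-compact-to-discrete passage is in hand.
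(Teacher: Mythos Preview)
The paper does not prove this theorem; it is quoted as a black box from Woess \cite[Thm.~4.1]{woess1994topological}, with only the remark that it ``build[s] on deep results of Gromov \cite{gromov1981groups} and Bass \cite{bass1972degree}''. Your sketch is essentially a correct outline of the argument behind that citation: Trofimov--Woess produce a compact normal $N \trianglelefteq G$ with discrete quotient under the polynomial-growth hypothesis, then \v{S}varc--Milnor transfers the growth bound to the discrete group $H = G/N$, Gromov gives virtual nilpotence, and the Bass--Guivarc'h formula pins down the degree-$2$ case as virtually $\Z^2$.

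One minor wrinkle: to force the growth degree to be exactly $2$ you invoke one-endedness of $X$, but that is a hypothesis of the ambient section, not of the theorem as stated. If ``quadratic growth'' is read (as is standard) to mean two-sided bounds $c_1 n^2 \le V(n) \le c_2 n^2$, your upgrading step is unnecessary and the argument goes through without importing one-endedness; if it is read as ``at most quadratic'', then some extra hypothesis is genuinely needed to rule out the virtually-$\Z$ and finite cases, and you should flag that you are borrowing it from context rather than from the theorem's own hypotheses.
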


As noted in the hyperbolic case, the quotient map $X \to X/N$ is a quasi-isometry, so  by the Švarc--Milnor lemma we have that $X$ is quasi-isometric to $\Z^2$. We thus deduce the following. 

\begin{theorem}\label{thm:nhyp}
	Let $X$ be a one-ended, connected, locally finite, quasi-transitive graph which is not Gromov-hyperbolic. If $X$ is quasi-isometric to a complete Riemannian plane, then $X$ is quasi-isometric to $\Z^2$, and thus to a planar Cayley graph. 
\end{theorem}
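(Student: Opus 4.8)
The plan is to show that $X$ has at most quadratic growth, at which point Woess' theorem immediately yields that $X$ is quasi-isometric to $\Z^2$, and hence to the standard (planar) Cayley graph of $\Z^2$. Since Gromov-hyperbolicity is a quasi-isometry invariant of geodesic spaces and $X$ is not hyperbolic, the complete Riemannian plane $P$ to which $X$ is quasi-isometric is also not Gromov-hyperbolic. I would first reduce to the case where $X$ is vertex-transitive, which is harmless by \cite[Prop.~5.1]{woess1994topological} and preserves both the growth rate and the quasi-isometry type; this reduction is what permits the application of the Varopoulos isoperimetric inequality of the Proposition, whose contrapositive says that producing finite sets that are large relative to their boundary limits the growth of $X$.

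The core of the argument is therefore to manufacture, for arbitrarily large $n$, a finite set $\Omega \subset VX$ witnessing exactly this: $|\partial\Omega| \leq c_1 n$ while $|\Omega| \geq V(c_2 n)$, for uniform constants $c_1, c_2 > 0$. Here I would exploit the failure of hyperbolicity. Fixing a triangulation $K$ of $P$ with $K \into P$ a quasi-isometry, the fact that $K$ is not Gromov-hyperbolic means Papasoglu's bigon criterion fails, so as in \cite[Lem.~3.8]{papasoglu2005quasi} one extracts arbitrarily long simple loops $\ell \subset K$ that are uniformly locally $(\lambda,\varepsilon)$-quasi-geodesic. By the Jordan curve theorem each such $\ell$ bounds a disk $U$, and the local quasi-geodesic hypothesis forces $U$ to contain a vertex $v$ at distance at least $c \cdot \length(\ell)$ from $\ell$. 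Transporting this picture across a quasi-isometry $\psi : K \to X$, the point $\psi(v)$ must lie in a bounded component of $X - N_m(\psi(\ell))$ for some uniform $m$; taking $\Omega$ to be the vertex set of that component and $n = \length(\ell)$ yields precisely the two bounds above.

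Feeding these sets into the Varopoulos inequality rules out super-quadratic growth, so $X$ has at most quadratic growth. Woess' theorem then supplies a compact normal subgroup $N \leq \Isom(X)$ such that $\Isom(X)/N$ is virtually $\Z^2$ and acts properly discontinuously on the quotient graph $X/N$. Since $N$ has bounded orbits the quotient map $X \to X/N$ is a quasi-isometry, and as the action on $X/N$ is also cocompact the Švarc--Milnor lemma concludes that $X$, like $\Isom(X)/N$, is quasi-isometric to $\Z^2$. The step I expect to be the main obstacle is the transfer across $\psi$: one must verify quantitatively that the deep interior vertex $v$ of the disk $U$ in $P$ really does map to a genuine combinatorial separation of $X$ with controlled boundary, i.e.\ that the `depth' $c \cdot \length(\ell)$ dominates the `width' $m$ of the neighbourhood $N_m(\psi(\ell))$ uniformly as $\ell$ grows. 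This is precisely where the uniform local quasi-geodesic control on the family of loops is indispensable.
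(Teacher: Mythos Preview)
Your proposal is correct and follows essentially the same route as the paper: reduce to the vertex-transitive case, use Papasoglu's bigon criterion on a triangulation of the plane to produce arbitrarily long local quasi-geodesic Jordan loops, transport the resulting deep disks across a quasi-isometry to obtain finite sets $\Omega$ with $|\partial\Omega|\leq c_1 n$ and $|\Omega|\geq V(c_2 n)$, feed these into Varopoulos to bound growth, and finish with Woess' theorem and \v{S}varc--Milnor. You have also correctly isolated the only step the paper leaves as an ``easy check'', namely the uniform control in the transfer of the separation across $\psi$.
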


\section{Infinite-ended graphs}

We will need the following theorem of Hamann \cite{hamann2018tree}, which is inspired by the analogous result for finitely generated groups due to Papasoglu and Whyte \cite{papasoglu2002quasi}.

\begin{theorem}[Hamann]\label{thm:hamann}
	Let $X$, $Y$ be connected, locally finite, infinite-ended, quasi-transitive graphs. Let $(T, \mathcal V)$ and $(S, \mathcal U)$ be canonical tree decompositions of $X$, $Y$ with bounded adhesion and tight separators, such that each torso has at most one end.
	Let $\Upsilon_X$, $\Upsilon_Y$ denote the set of quasi-isometry classes of the one-ended torsos of the aforementioned tree decompositions, without repetitions. 
	Then $X$ and $Y$ are quasi-isometric if and only if $\Upsilon_X = \Upsilon_Y$. 
\end{theorem}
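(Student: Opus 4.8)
The plan is to adapt the strategy of Papasoglu and Whyte \cite{papasoglu2002quasi}, of which this is the graph-theoretic analogue, proving the two implications separately. For the forward direction, suppose $f : X \to Y$ is a quasi-isometry; the goal is to show that $f$ respects the two tree decompositions up to bounded error, sending each one-ended torso into a bounded neighbourhood of a one-ended torso in $Y$ and thereby inducing a bijection of quasi-isometry classes. The essential point is canonicity: because $(T, \mathcal V)$ is canonical with bounded adhesion and tight separators, it is coarsely reconstructible from the geometry of $X$ as a system of uniformly thin separations. First I would show that $f$ must carry such a thin separation system in $X$ to a comparable one in $Y$, yielding a quasi-isometry of the underlying trees $T \to S$ that matches parts to parts; restricting $f$ to a single one-ended part then gives, after a bounded adjustment, a quasi-isometry onto the matched part of $Y$, so that $\Upsilon_X \subseteq \Upsilon_Y$. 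Applying the symmetric argument to a quasi-inverse of $f$ gives the reverse inclusion, and hence equality.

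For the converse, assume $\Upsilon_X = \Upsilon_Y$ and build a quasi-isometry directly. I would view $X$ and $Y$ as trees of spaces over $T$ and $S$, with vertex spaces the torsos and edge spaces the (uniformly bounded) adhesion sets. The construction has two layers: first produce a quasi-isometry $T \to S$ of the underlying trees respecting the labelling of each node by the quasi-isometry class of its torso, and then lift it to a map $X \to Y$ by choosing, on each part, a quasi-isometry between the relevant torsos, available since they represent a common class in $\Upsilon_X = \Upsilon_Y$. The tree-matching step is exactly the Papasoglu--Whyte mechanism: since $X$ is infinite-ended and quasi-transitive with at-most-one-ended torsos, $T$ is a bushy, essentially homogeneous tree whose nodes carry finitely many labels, each occurring densely, and any two such labelled trees with equal label sets are quasi-isometric.

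The main obstacle is the gluing in this converse direction. Choosing a quasi-isometry on each torso separately does not by itself yield a quasi-isometry of the whole graph: one must arrange that the chosen maps agree, up to a uniform bound, along every shared adhesion set, and that the multiplicative and additive constants do not degrade across the infinitely many parts. This uniform coherence is precisely where the hypotheses earn their keep---bounded adhesion keeps the edge spaces uniformly small, so the mismatch to be absorbed along each is uniformly bounded, while canonicity and tightness supply the homogeneity needed to run the tree-matching with uniform constants. Making this patching rigorous, rather than constructing the piecewise maps, is the step I expect to dominate the proof.
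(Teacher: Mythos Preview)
The paper does not prove this statement: Theorem~\ref{thm:hamann} is quoted as a result of Hamann, with the reader directed to \cite{hamann2018tree} for the argument and the surrounding definitions. There is therefore no in-paper proof to compare your proposal against; the author only notes that Hamann's theorem is ``inspired by the analogous result for finitely generated groups due to Papasoglu and Whyte \cite{papasoglu2002quasi}'', which is exactly the template you are following.

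As an outline of the Papasoglu--Whyte strategy your sketch is broadly reasonable, but one point in the forward direction is overclaimed. A quasi-isometry $f:X\to Y$ need not induce a quasi-isometry $T\to S$ ``matching parts to parts''; the correspondence between vertices of $T$ and $S$ is generally many-to-many, and the trees themselves can look quite different. What one actually shows is weaker and sufficient: each one-ended torso of $X$, being a coarsely well-defined maximal one-ended subspace, must land within bounded distance of a single one-ended torso of $Y$ (finite and two-ended torsos cannot coarsely swallow it), and the restriction of $f$ is then a quasi-isometry between those two torsos. This gives $\Upsilon_X\subseteq\Upsilon_Y$ without any global tree map. Your converse sketch is closer to the mark, and you correctly identify the uniform gluing as the genuine work; in Hamann's treatment this is where quasi-transitivity (finitely many torso types, hence uniform QI constants) and bounded adhesion are used in earnest.
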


We direct the reader to \cite{hamann2018tree} for definitions and statements relating to (canonical) tree decompositions. 
Our main result now follows easily from combining all that has come before. 

\main*

\begin{proof}
	If $X$ is finite or two-ended then there is nothing to prove. If $X$ is one-ended then we are done by Theorems~\ref{thm:hyp} and \ref{thm:nhyp}. 
	
	Suppose then that $X$ is infinite-ended. By the main theorem of \cite{macmanus2023accessibility} we have that $X$ is accessible. More precisely, by \cite[Cor.~C]{macmanus2023accessibility} we have that $X$ admits a canonical tree decomposition with bounded adhesion and tight separators, where each infinite torso is quasi-isometric to a complete Riemannian plane. If $\Upsilon$ denotes the set of quasi-isometry classes of these one-ended torsos, then by Theorems~\ref{thm:hyp} and \ref{thm:nhyp} we have that $\Upsilon$ contains at most two elements, picked from the QI-classes of the hyperbolic plane $\mathbf H^2$ and the Euclidean plane $\R^2$. Combining this observation with Theorem~\ref{thm:hamann}, we deduce that $X$ is quasi-isometric to one of
	$$
	F_2, \ \  \Z^2 \ast \Z^2, \ \ \Sigma \ast \Sigma, \ \  \Z^2 \ast \Sigma, 
	$$
	where $\Sigma$ denotes some fixed cocompact Fuchsian group. The $F_2$-case arises exactly when every torso is finite. In particular, we deduce that $X$ is quasi-isometric to a planar Cayley graph. 
\end{proof}

\begin{remark}\label{rmk:function-groups}
	We conclude by remarking that every planar Cayley graph is quasi-isometric to a `Kleinian function group'. 
	
	Recall that a \textit{Kleinian} group $G$ is a discrete subgroup of $\rm{PSL}(2, \C)$. Such a group admits a canonical action on $\mathbf S^2$ by M\"obius transformations. The unique maximal open subspace $\Omega$ of $\mathbf S^2$ on which $G$ acts properly discontinuously is called the \textit{domain of discontinuity}. If $\Omega$ contains a $G$-invariant connected component then we call $G$ a \textit{function group}. These groups can be completely classified, see \cite{marden1974geometry} or \cite{maskit2012kleinian} for further reading. 
	
	There is a natural combinatorial characterisation of function groups in terms of their Cayley graphs.  
	Given a Cayley graph $\Gamma$ of a finitely generated group $G$, a planar embedding $\vartheta : \Gamma \into \mathbf S^2$  is said to be \textit{covariant} if the action of $G$ on $\Gamma$ sends facial paths to facial paths. 
	By a theorem of Levinson--Maskit \cite{levinson1975special}, the class of finitely generated groups group admitting a covariant embedding into $\mathbf S^2$ corresponds precisely to the Kleinian function groups (see also \cite{georgakopoulos2020planar}).  
	
	Whitney's planar embedding theorem \cite{whitney1992congruent, richter20023} states that 3-connected\footnote{Recall that a graph $\Gamma$ is said to be \textit{3-connected} if it is connected and the removal of any vertex or pair of vertices cannot disconnect $\Gamma$.} planar graphs embed uniquely into 2-sphere, up to post-composition with homeomorphisms of the 2-sphere. Therefore, a 3-connected planar Cayley graph is necessarily covariantly embedded. Thus, it is sufficient to find examples of 3-connected planar Cayley graphs in each quasi-isometry class. The only non-trivial case to consider is infinite-ended groups which are not virtually free. 
	
	These can be constructed by amalgamating triangle groups along certain cyclic subgroups. 
	For example, consider the Euclidean (2,3,6)-triangle group 
	$$
	H = \pres{r, s, t}{(rs)^2, \ (st)^3, \ (rt)^6}.
	$$
	The Cayley graph corresponding to the above presentation is planar. The order-three cyclic subgroup $C = \langle s t \rangle \leq H$ lies on the boundary of a single face. This implies that the Cayley graph of the double $G = H \ast_C H$ given by the presentation
	$$
	G = \pres{a, b, c, r, s, t}{(r s)^2, \ (st)^3, \ (rt)^6,\ (ab)^2, \ (bc)^3, \ (ac)^6, \ bc = st}
	$$
	is planar. Since $|C| \geq 3$ we have that this Cayley graph is also 3-connected. Thus $G$ is a Kleinian function group by the above discussion. 
	
	The theorem of Papasoglu and Whyte \cite{papasoglu2002quasi} implies that $G$  is indeed quasi-isometric to $\Z^2 \ast \Z^2$.  
	A similar construction is possible within the other infinite-ended quasi-isometry classes, which we leave as an exercise to the interested reader.
\end{remark}

\subsection*{Acknowledgements}

I am very grateful to Yves Cornulier for taking the time to share some insight into the structure of isometry groups of hyperbolic graphs with circular boundaries, and also to Louis Esperet, Agelos Georgakopoulos, Matthias Hamann, and Panos Papasoglu for comments on an earlier version of this note. I also thank Lawk Mineh for providing some very sobering counterexamples.

\bibliographystyle{abbrv}
\bibliography{references}

\end{document}